\begin{document}

\newtheorem{theorem}[subsection]{Theorem}
\newtheorem{proposition}[subsection]{Proposition}
\newtheorem{lemma}[subsection]{Lemma}
\newtheorem{corollary}[subsection]{Corollary}
\newtheorem{conjecture}[subsection]{Conjecture}
\newtheorem{prop}[subsection]{Proposition}
\numberwithin{equation}{section}
\newcommand{\mr}{\ensuremath{\mathbb R}}
\newcommand{\dif}{\mathrm{d}}
\newcommand{\intz}{\mathbb{Z}}
\newcommand{\ratq}{\mathbb{Q}}
\newcommand{\natn}{\mathbb{N}}
\newcommand{\comc}{\mathbb{C}}
\newcommand{\rear}{\mathbb{R}}
\newcommand{\prip}{\mathbb{P}}
\newcommand{\uph}{\mathbb{H}}
\newcommand{\fief}{\mathbb{F}}
\newcommand{\majorarc}{\mathfrak{M}}
\newcommand{\minorarc}{\mathfrak{m}}
\newcommand{\sings}{\mathfrak{S}}
\newcommand{\fA}{\ensuremath{\mathfrak A}}
\newcommand{\mn}{\ensuremath{\mathbb N}}
\newcommand{\mq}{\ensuremath{\mathbb Q}}
\newcommand{\half}{\tfrac{1}{2}}
\newcommand{\f}{f\times \chi}
\newcommand{\summ}{\mathop{{\sum}^{\star}}}
\newcommand{\chiq}{\chi \bmod q}
\newcommand{\chidb}{\chi \bmod db}
\newcommand{\chid}{\chi \bmod d}
\newcommand{\sym}{\text{sym}^2}
\newcommand{\hhalf}{\tfrac{1}{2}}
\newcommand{\sumstar}{\sideset{}{^*}\sum}
\newcommand{\sumprime}{\sideset{}{'}\sum}
\newcommand{\sumprimeprime}{\sideset{}{''}\sum}
\newcommand{\shortmod}{\ensuremath{\negthickspace \negthickspace \negthickspace \pmod}}
\newcommand{\V}{V\left(\frac{nm}{q^2}\right)}
\newcommand{\sumi}{\mathop{{\sum}^{\dagger}}}
\newcommand{\mz}{\ensuremath{\mathbb Z}}
\newcommand{\leg}[2]{\left(\frac{#1}{#2}\right)}
\newcommand{\muK}{\mu_{\omega}}

\title[First Moment of {H}ecke {$L$}-functions with quartic characters]{First Moment of {H}ecke {$L$}-functions with quartic characters at the central point}

\date{\today}
\author{Peng Gao and Liangyi Zhao}

\begin{abstract}
In this paper, we study the first moment of central values of Hecke $L$-functions associated with
quartic characters.
\end{abstract}

\maketitle

\noindent {\bf Mathematics Subject Classification (2010)}: 11M41, 11L40  \newline

\noindent {\bf Keywords}: quartic Hecke characters, Hecke $L$-functions

\section{Introduction}

Because of the arithmetic information that they carry, the values of $L$-function at $s=1/2$ are very important in number theory.  The average of these values over a family of characters of a fixed order has been studied extensively.  For example, M. Jutila \cite{Jutila} gave the evaluation the mean value of $L(1/2, \chi)$ for quadratic Dirichlet characters.  The error term in the asymptotic formula in \cite{Jutila} was later improved by \cites{DoHo, MPY, ViTa}.  In \cite{B&Y}, S. Baier and M. P. Young studied the moments of $L(1/2, \chi)$ for cubic Dirichlet characters.  Literature also abounds in the investigation of moments of Hecke $L$-functions associated with various families of characters of a fixed order \cites{FaHL, GHP, FHL, Luo, Diac}.   In this paper, we shall investigate the first moment of $L$-functions associated with a family of quartic Hecke characters.  \newline

Let $K =\mq(i)$.  It is well known that $K$ has class number $1$, and in the ring of
integers $\mathcal{O}_K = \mz[i]$ every ideal coprime to $2$ has a unique
generator congruent to $1 \pmod {(1+i)^3}$. For $1 \neq c \in \mathcal{O}_K$ which
is square-free and congruent to $1 \pmod {16}$, let $(\frac {\cdot}{c})_4$ be the quartic
residue symbol. Since $\chi_c =(\frac {\cdot}{c})_4$ is trivial on
units, it can be regarded as a primitive character of the ray
class group $h_{(c)}$. We recall here that for any $c$, the ray
class group $h_{(c)}$ is defined to be $I_{(c)}/P_{(c)}$, where
$I_{(c)} = \{ \mathcal{A} \in I : (\mathcal{A}, (c)) = 1 \}$ and
$P_{(c)} = \{(a) \in P : a \equiv 1 \pmod{c} \}$ with $I$ and $P$
denoting the group of fractional ideals in $K$ and the subgroup of
principal ideals, respectively. The Hecke $L$-function associated
with $\chi_c$ is defined for $\Re(s) > 1$ by
\begin{equation*}
  L(s, \chi_c) = \sum_{0 \neq \mathcal{A} \subset
  \mathcal{O}_K}\chi_c(\mathcal{A})(N(\mathcal{A}))^{-s},
\end{equation*}
  where $\mathcal{A}$ runs over all non-zero integral ideals in $K$ and $N(\mathcal{A})$ is the
norm of $\mathcal{A}$. As shown by E. Hecke, $L(s, \chi_c)$ admits
analytic continuation to an entire function and satisfies the
functional equation (\cite[Theorem 3.8]{HIEK})
\begin{equation}
\label{1.1}
  \Lambda(s, \chi_c) = W(\chi_c)(N(c))^{-1/2}\Lambda(1-s, \overline{\chi}_c),
\end{equation}
   where
\begin{equation}
\label{1.2}
   W(\chi_c)=\sum_{a \in \mathcal{O}_{K}/(c)}\chi_c(a)e\Big ( \text{Tr}\Big (\frac {a}{\delta c}\Big )\Big ),
\end{equation}
   with $(\delta)=(\sqrt{-4})$ being the different of $K$, and
\begin{equation*}
  \Lambda(s, \chi_c) = (|D_K|N(c))^{s/2}(2\pi)^{-s}\Gamma(s)L(s, \chi_c),
\end{equation*}
   where $D_K=-4$ is the discriminant of $K$. We also note that
   $W(\chi_c)=g(c)$, where $g(c)$ is the Gauss sum, defined later in
   Section \ref{sec2.4}. The above discussions also apply to $c=1$, provided that we interpret $\chi_1$ as the principal character $\pmod 1$ so that $\Lambda(s, \chi_1)$ becomes $\zeta_{\mq(i)}(s)$, the Dedekind zeta function of $\mq(i)$, a convention we shall follow throughout the paper. \newline

Our goal here is to establish the following theorem.
\begin{theorem}
\label{firstmoment}
  For $y \rightarrow \infty$ and any $\varepsilon > 0$, we have
\begin{equation} \label{1stmom}
   \sumstar_{c \equiv 1 \pmod {16}}L\left( \frac{1}{2},
   \chi_c \right)\exp \left( - \frac{N(c)}{y} \right)=Ay+O_{\varepsilon}(y^{13/14+\varepsilon}) .
\end{equation}
   Here $A$ is an explicit constant given in \eqref{Adef} and $\sum^*$ denotes summation over square-free elements of $\mz[i]$ congruent to $1 \pmod {16}$.
\end{theorem}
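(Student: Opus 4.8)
The plan is to begin from an approximate functional equation for $L(\tfrac12,\chi_c)$ obtained from \eqref{1.1}. Expressing the ratio of gamma factors as a Mellin integral and shifting contours, I would write, with a smooth rapidly decaying cutoff $V$,
$$
L\left(\tfrac12,\chi_c\right)
=\sum_{0\neq\mathcal{A}\subset\mathcal{O}_K}\frac{\chi_c(\mathcal{A})}{\sqrt{N(\mathcal{A})}}\,V\!\left(\frac{N(\mathcal{A})}{\sqrt{N(c)}}\right)
+\frac{W(\chi_c)}{\sqrt{N(c)}}\sum_{0\neq\mathcal{A}\subset\mathcal{O}_K}\frac{\overline{\chi}_c(\mathcal{A})}{\sqrt{N(\mathcal{A})}}\,V\!\left(\frac{N(\mathcal{A})}{\sqrt{N(c)}}\right),
$$
where $W(\chi_c)=g(c)$ is the quartic Gauss sum. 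Substituting this into the left side of \eqref{1stmom}, representing $V$ by its Mellin transform so that the $N(c)$-dependence becomes a clean power, and interchanging the order of summation, I reduce everything to understanding the two weighted character sums $\sumstar_{c}\chi_c(\mathcal{A})\exp(-N(c)/y)$ and $\sumstar_{c}g(c)\overline{\chi}_c(\mathcal{A})\exp(-N(c)/y)$ attached to each ideal $\mathcal{A}$.

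The main term is extracted from the first (principal) sum. The key point is that $\chi_c(\mathcal{A})=\left(\frac{\mathcal{A}}{c}\right)_4=1$ whenever $\mathcal{A}=\mathcal{B}^4$ is a perfect fourth power coprime to $c$, since then the quartic symbol is a fourth power of a symbol and hence trivial. Isolating this diagonal $\mathcal{A}=\mathcal{B}^4$, the remaining $c$-sum is $\sumstar_{c\equiv 1\,(16)}\exp(-N(c)/y)$, which I would evaluate by a contour argument applied to the Dirichlet series counting square-free $c\equiv 1\pmod{16}$ in $\mz[i]$; this produces a quantity of size $\sim y$, and summing against $V$ over $\mathcal{B}$ yields the main term $Ay$. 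The ideals $\mathcal{A}$ that are perfect squares but not fourth powers give genuinely nontrivial quadratic twists, so they exhibit cancellation and contribute only to the error term rather than a second main term.

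For the remaining (non-fourth-power) $\mathcal{A}$ I would invoke quartic reciprocity in $\mz[i]$, which is clean here because $c\equiv 1\pmod{16}$, to replace $\left(\frac{\mathcal{A}}{c}\right)_4$ by $\left(\frac{c}{\mathcal{A}}\right)_4$. This turns the inner sum into a smoothly weighted sum of a nonprincipal character in $c$ modulo (the fourth-power-free part of) $\mathcal{A}$. After detecting the square-free condition on $c$ via the Möbius function on $\mz[i]$, I would apply Poisson summation in $\mz[i]\cong\mz^2$ to the $c$-sum in residue classes: the zero frequency vanishes since the character is nonprincipal, and the nonzero frequencies yield quartic Gauss sums that must be estimated. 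The dual sum is treated the same way, but it carries the extra factor $g(c)$, so there one must control averages of \emph{products} of quartic Gauss sums with quartic symbols.

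This last step is where I expect the \textbf{main obstacle} to lie: obtaining sufficiently strong bounds for the resulting sums of quartic Gauss sums, which I would approach through a large-sieve or mean-value inequality for quartic Hecke characters together with the known size bounds for $g(c)$. The off-diagonal principal sum, the dual Gauss-sum sum, and the length $\sim y^{1/2}$ of the approximate functional equation each contribute error terms of competing sizes; introducing a free truncation parameter to split the $\mathcal{A}$-range and optimizing it against these Gauss-sum estimates is what produces the power saving and, after balancing, the exponent $13/14$ in \eqref{1stmom}.
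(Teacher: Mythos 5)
Your skeleton matches the paper's: approximate functional equation, main term from fourth-power $\mathcal{A}$, quartic reciprocity to convert $\chi_c(\mathcal{A})$ into $\chi_a(c)$, and separate error analyses for the principal and dual pieces. The genuine gap is in your treatment of the dual sum \eqref{sum2}, which you yourself flag as the main obstacle but then propose to resolve with the wrong tool. You suggest bounding $\sum_a N(a)^{-1/2}\sum_c \widetilde{g}(c)\overline{\chi}_c(a)(\cdots)$ via a large sieve for quartic characters together with $|g(c)|=N(c)^{1/2}$. Run the numbers: on square-free $c$ the coefficients $\widetilde{g}(c)$ have modulus $1$, the $c$-range has length $N\sim y$ and the $a$-range length $M\sim y/x\leq y$, so Lemma~\ref{quartls} plus Cauchy--Schwarz gives at best
$\bigl((M+N+(MN)^{2/3})\cdot N\bigr)^{1/2}\ll y^{1+\varepsilon}$,
i.e.\ the size of the main term, with no power saving; the large sieve treats $\widetilde{g}(c)$ as arbitrary unimodular coefficients and cannot detect cancellation among the Gauss sums themselves. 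What the paper actually uses here is the meromorphic continuation of $h(a,s;\chi)=\sum_c\chi(c)g(a,c)N(c)^{-s}$ into $\Re(s)>1$ with a lone pole at $s=5/4$ and polynomial control in $N(a)$ (Lemma~\ref{lem1}, resting on Patterson's work on quartic Gauss sums via metaplectic theory, plus the twisted-multiplicativity reductions of Lemma~\ref{lem2}). That continuation is what produces the bound $N(a)^{1/4+\varepsilon}y^{1/2}+y^{3/4}N(a)^{1/8}$ of Lemma~\ref{crubound} and hence $\sum_2\ll y^{5/4}x^{-3/4}+y^{11/8}x^{-5/8}$. Without some such deep input on averages of Gauss sums, your plan stalls at $y^{1+\varepsilon}$ and never beats the main term.

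Two secondary points. First, the large sieve is indeed needed, but for the other piece: in \eqref{sum1} the square-free restriction on $c$ is the real obstruction, since after M\"obius expansion each modulus $d$ contributes $N(a)^{(1+\varepsilon)/2}$ by the P\'olya--Vinogradov/Poisson bound \eqref{7.3}, and summing over all $N(d)\leq y^{1/2}$ gives a total of order $y^{1/2}x$, too large for any admissible $x$. The paper splits the $d$-sum at a parameter $B$ and applies Lemma~\ref{quartls} in the $a$-aspect to the large-$d$ tail; your Poisson-only plan does not address that tail. Second, your balanced approximate functional equation with both lengths $\sqrt{N(c)}$ corresponds to $x=y^{1/2}$, for which the term $y^{11/8}x^{-5/8}$ equals $y^{17/16}>y$; the unbalanced parameter $x$ (ultimately $x=y^{5/7}$) is essential and must be built into the functional equation from the outset rather than introduced as an afterthought.
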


One important difference between our Theorem~\ref{firstmoment} and the results in \cites{FHL, Diac} is that we have the additional restriction to square-free elements of $\intz[i]$ in the average.  Another significant distinction between our result and those in \cite{FHL} is the authors of \cite{FHL} study the mean values of highly modified $L$-series associated fixed order Hecke character.   Moreover, the result in \cite{Diac} ``does not seem to be accessible by more traditional methods" while our result here is obtained via classical means.   We also note here that second moment and non-vanishing results for Hecke $L$-functions associated with characters of an arbitrary fixed order were proved in \cite{BGL}.

\subsection{Strategy of the proof}  To prove \eqref{1stmom}, $L(1/2, \chi_c)$ is first decomposed using the approximate functional equation \eqref{approxfuneq}.  We are then led to study two sums, \eqref{sum1} and \eqref{sum2}.  After various manipulations and transformations, the sum in \eqref{sum1} will give the main term on the right-hand side of \eqref{1stmom}.  The large sieve for quartic Hecke characters, Lemma~\ref{quartls}, plays an important role in controlling the error terms in the approximation.  The sum in \eqref{sum2} relies crucially on an estimate of a smooth Gauss sum, \eqref{bound}.  Theorem \ref{firstmoment} follows from the combination and optimization of the above-mentioned estimates. \newline

One should be able to establish a result similar to Theorem~\ref{firstmoment} if $1/2$ on the left-hand side of \eqref{1stmom} is replaced by $1/2+it$ and $c$ by a set of more general set of numbers satisfying some almost-prime constraints.  We leave the details of these possible extensions the enthused reader.

\section{Preliminaries}
\label{sec 2}

In this section, we collect the information needed in the proof of our main result.

%%----------------------------------------------------------------------------
\subsection{Quartic symbol and quartic Gauss sum}
\label{sec2.4}
%%----------------------------------------------------------------------------
   The symbol $(\frac{\cdot}{n})_4$ is the quartic
residue symbol in the ring $\mz[i]$.  For a prime $\pi \in \mz[i]$
with $N(\pi) \neq 2$, the quartic character is defined for $a \in
\mz[i]$, $(a, \pi)=1$ by $\leg{a}{\pi}_4 \equiv
a^{(N(\pi)-1)/4} \pmod{\pi}$, with $\leg{a}{\pi}_4 \in \{
\pm 1, \pm i \}$. When $\pi | a$, we define
$\leg{a}{\pi}_4 =0$.  Then the quartic character can be extended
to any composite $n$ with $(N(n), 2)=1$ multiplicatively. \newline

Note that in $\intz[i]$, every ideal coprime to $2$ has a unique generator congruent to 1 modulo $(1+i)^3$.  Such a generator is called primary. Recall that the quartic reciprocity law (Theorem 2 on page 123 of \cite{I&R}) states that for two primary primes  $m, n \in \mz[i]$,
\begin{equation*}
 \leg{m}{n}_4 = \leg{n}{m}_4(-1)^{((N(n)-1)/4)((N(m)-1)/4)}.
\end{equation*}
Observe, by Lemma 6 on page 121 of \cite{I&R}, that a non-unit $n=a+bi$ in $\mz[i]$ is congruent to $1 \pmod{(1+i)^3}$ if and only if $a \equiv 1 \pmod{4}$, $b \equiv 0 \pmod{4}$ or $a \equiv 3 \pmod{4}$, $b \equiv 2 \pmod{4}$. \newline

 For a non-unit $n \in \mz[i]$, $n \equiv 1 \pmod {(1+i)^3}$, the quartic
 Gauss sum $g(n)$ and a corresponding quadratic Gauss sum $g_2(n)$ is defined by
\[    g(n) =\sum_{x \bmod{n}} \leg{x}{n}_4 \widetilde{e}\leg{x}{n} \qquad \mbox{and} \qquad g_2(n)  =\sum_{x \bmod{n}} \leg{x}{n}^2_4 \widetilde{e}\leg{x}{n}, \]
  where $ \widetilde{e}(z) =\exp \left( 2\pi i  (\frac {z}{2i} - \frac {\overline{z}}{2i}) \right)$. \newline
  
We now extend the definition of $g(n)$ and $g_2(n)$ to $n=1$ by setting $g(1)=g_2(1)=1$. This is done so that the functional equation \eqref{1.1} is still valid with $W(\chi_1)=g(1)$. Furthermore, the following well-known relation (see \cite{Diac}) now holds for all $n$:
\begin{align}
\label{2.1}
   |g(n)|& =\begin{cases}
    \sqrt{N(n)} \qquad & \text{if $n$ is square-free}, \\
     0 \qquad & \text{otherwise}.
    \end{cases}
\end{align}
   The above relation holds for $g_2(n)$ as well. \newline

   More generally, for a non-unit $n \in \mz[i]$, $n \equiv 1 \pmod {(1+i)^3}$, we set
\begin{equation*}
 g(r,n) = \sum_{x \bmod{n}} \leg{x}{n}_4 \widetilde{e}\leg{rx}{n}.
\end{equation*}

    The following properties of $g(r,n)$ can be found in \cite{Diac}:
\begin{align}
\label{eq:gmult}
 g(rs,n) & = \overline{\leg{s}{n}}_4 g(r,n), \quad (s,n)=1, \\
\label{2.03}
   g(r,n_1 n_2) &=\leg{n_2}{n_1}_4\leg{n_1}{n_2}_4g(r, n_1) g(r, n_2), \quad (n_1, n_2) = 1, \\
\label{2.04}
g(\pi^k, \pi^l)& =\begin{cases}
    N(\pi)^kg(\pi) \qquad & \text{if} \qquad l= k+1, k \equiv 0 \pmod {4},\\
    N(\pi)^kg_2(\pi) \qquad & \text{if} \qquad l= k+1, k \equiv 1 \pmod {4},\\
    N(\pi)^k\leg{-1}{\pi}_4\overline{g}(\pi) \qquad & \text{if} \qquad l= k+1, k \equiv 2 \pmod {4},\\
    -N(\pi)^k, \qquad & \text{if} \qquad l= k+1, k \equiv 3 \pmod {4},\\
      \varphi(\pi^l)=\#(\mz[i]/(\pi^l))^* \qquad & \text{if} \qquad  k \geq l, l \equiv 0 \pmod {4},\\
      0 \qquad & \text{otherwise}.
\end{cases}
\end{align}

     From the supplement theorem to the quartic reciprocity law (see for example, Lemma 8.2.1 and Theorem 8.2.4 in \cite{BEW}),
we have for $n=a+bi$ being primary,
\begin{align}
\label{2.05}
  \leg {i}{n}_4=i^{(1-a)/2} \qquad \mbox{and} \qquad  \hspace{0.1in} \leg {1+i}{n}_4=i^{(a-b-1-b^2)/4}.
\end{align}
   It follows that for any $c \equiv 1 \pmod {16}$, we have
\begin{align}
\label{2.5}
  \chi_c(1+i)=1.
\end{align}

\subsection{The approximate functional equation}

    Let $x > 1$. By evaluating the integral
\begin{equation*}
   \frac {1}{2\pi i}\int\limits_{(2)}(2\pi)^{-(s+1/2)}\Gamma \left( s+\frac{1}{2} \right)L \left( s+\frac{1}{2}, \chi_c \right) x^s \frac{\dif s}{s}
\end{equation*}
   in two ways, we derive the following expression for $L(1/2, \chi_c)$:
\begin{equation} \label{approxfuneq}
 L \left( \frac{1}{2}, \chi_c \right) = \sum_{0 \neq \mathcal{A} \subset
  \mathcal{O}_K}\frac{\chi_c(\mathcal{A})}{N(\mathcal{A})^{1/2}}\Gamma \left( \frac{1}{2}, \frac{2\pi}{x} N(\mathcal{A}) \right) \\
   + \frac{W(\chi_c)}{N(c)^{1/2}} \sum_{0 \neq \mathcal{A} \subset
  \mathcal{O}_K}\frac{\overline{\chi}_c(\mathcal{A})}{N(\mathcal{A})^{1/2}}\Gamma\left( \frac{1}{2}, \frac{2\pi
  N(\mathcal{A})x}{|D_K|N(c)} \right),
  \end{equation}
   where
\begin{align*}
  \Gamma \left( \frac{1}{2}, \xi \right)=\frac {1}{2\pi
   i}\int\limits_{(2)}\frac {\Gamma(s+1/2)}{\Gamma (1/2)}\frac
   {\xi^{-s}}{s} \ \dif s=\frac
   {1}{\Gamma(1/2)}\int\limits^{\infty}_{\xi}t^{-1/2}e^{-t} \ \dif t
\end{align*}
  is the (normalized) incomplete $\Gamma$-function. \newline

  It is easy to see that
\begin{align}
\label{2.07}
      \Gamma \left( \frac{1}{2}, \xi \right) = & \begin{cases}
     1+O(\xi^{1/4})\qquad & 0<\xi<1, \\
      O(e^{-\xi}) \qquad & \xi \geq 1.
    \end{cases}
\end{align}

\subsection{Average of a smooth Gauss sum}
\label{section: smooth Gauss}

    One crucial ingredient of this paper is a bound for the average of certain smoothed Gauss sums. \newline

    Before stating the result, we first introduce a few notations. For $(c,a)=1$, set
\begin{align*}
   \psi_a(c)=(-1)^{((N(a)-1)/4)((N(c)-1)/4)}.
\end{align*}
   It is easy to see that $\psi_a(c)$ is a ray class character $\pmod {16}$. \newline

    We use the convention in this subsection that all sums over elements of $\mz[i]$ are restricted to elements congruent to $1 \pmod {(1+i)^3}$ and we use $\pi$ to denote a prime in $\mz[i]$. Also let $\mu_{[i]}$ stand for the M\"obius function on $\mz[i]$. For any ray class character $\pmod {16}$, we set
\begin{align*}
   h(r,s;\chi)=\sum_{(n,r)=1}\frac {\chi(n)g(r,n)}{N(n)^s}.
\end{align*}

   For any square-free, non-unit $a \in \mz[i]$, let $\{\pi_1, \cdots, \pi_k \}$ be the set of distinct prime divisors of $a$.  We define
\begin{align*}
  P(a)=\prod^{k}_{i=1}\overline{\leg{(a/\prod^{i-1}_{j=1}\pi_j)^2}{\pi^3_i}}_4,
\end{align*}
   where we set the empty product to be $1$. As $\leg{\pi^2_2}{\pi^3_1}_4=\leg{\pi_2}{\pi_1}^2_4$ for two distinct primes $\pi_1, \pi_2$, one checks easily by induction on the number of prime divisors of $a$ that $P(a)$ is independent of the order of $\{\pi_1, \cdots, \pi_k \}$.

\begin{lemma}
\label{lem2} Suppose $f , \alpha$ are square-free and $(r, f ) = 1$.  Set
\begin{equation*}
  h(r,f,s;\chi)=\sum_{(n,rf)=1}\frac {\chi(n)g(r,n)}{N(n)^s}  \qquad  \mbox{and} \qquad h_{\alpha}(r,s;\chi)=\sum_{(n,\alpha)=1}\frac {\chi(n)g(r,n)}{N(n)^s}.
\end{equation*}
  Furthermore suppose $r =r_1r^2_2r^3_3r^4_4$ where $r_1r_2r_3$ is square-free, and let $r^*_4$ be the product of primes dividing $r_4$. We define
\begin{equation} \label{hstardef}
 h^*_{r_1}(r_1r_2^2r^3_3,s;\chi)= \sum_{a|r_2}\mu_{[i]}(a)\chi(a)^3N(a)^{2-3s}\overline{\leg{-r_1(r_2/a)^2r^3_3}{a^3}}_4 P(a) \left( \prod_{\pi| a}\overline{g}(\pi) \right)    h_{r_1}(r_1r_2^2r^3_3/a^2,s;\psi_{a^3}\chi),
\end{equation}
  where we define $\leg{\cdot}{1}_4=1$ in the sum above and the empty product to be $1$.  Then
\begin{align}
\label{2.11}
  h(r,f,s;\chi)&=\sum_{a | f}\frac {\mu_{[i]}(a)\chi(a)g(r,a)}{N(a)^s}h(a^2r,s;\psi_a\chi), \\
\label{2.12}
  h(r_1r^2_2r^3_3r^4_4,s;\chi) &=h(r_1r^2_2r^3_3, r^*_4,s;\chi), \\
\label{2.13}
  h(r_1r^2_2r^3_3,s;\chi) &=\prod_{\pi|r_3}(1-\chi(\pi)^4N(\pi)^{3-4s})^{-1}h_{r_1r_2}(r_1r_2^2r^3_3,s;\chi), \\
\label{2.14}
  h_{r_1r_2}(r_1r_2^2r^3_3,s;\chi) =& \prod_{\pi|r_2} \left(1-\psi_{\pi^3}(\pi)\chi(\pi)^4N(\pi)^{2-4s}|g(\pi)|^2\overline{\leg{-1}{\pi^3}}_4 \right )^{-1}  h^*_{r_1}(r_1r_2^2r^3_3,s;\chi), \\
\label{2.15}
  h_{r_1}(r_1r_2^2r^3_3,s;\chi) &=\prod_{\pi|r_1} \left( 1+\chi(\pi)^2N(\pi)^{1-2s}g_2(\pi)\overline{\leg{r_1r_2^2r^3_3/\pi}{\pi^2}}_4 \right)^{-1}h_{1}(r_1r_2^2r^3_3,s;\chi).
\end{align}
\end{lemma}
\begin{proof}
     As the proof is similar to that of \cite[Lemma 3.6]{B&Y}, we only give the proof of \eqref{2.14} here.
  To that end, let $ab^2c^3 \in \mz[i]$ and $\pi$ be a prime in $\mz[i]$ such that $abc\pi$ is square-free. Then
\[ h_{ab\pi}(ab^2c^3\pi^2 ,s;\chi) =\sum_{(n,ab\pi)=1}\frac {\chi(n)g(ab^2c^3 \pi^2,n)}{N(n)^s} \\
  =\sum_{(n,ab)=1}\frac {\chi(n)g(ab^2c^3 \pi^2,n)}{N(n)^s}-\sum_{\substack{\pi | n \\ (n,ab)=1 }}\frac {\chi(n)g(ab^2c^3 \pi^2,n)}{N(n)^s}. \]
   Writing in the latter sum $n = \pi^jn'$ with $(n', \pi) = 1$, we have by \eqref{2.03} that
\begin{align*}
   g(ab^2c^3 \pi^2, \pi^jn') = \leg{\pi^j}{n'}_4\leg{n'}{\pi^j}_4g(ab^2c^3 \pi^2, \pi^j)g(ab^2c^3 \pi^2, n').
\end{align*}
    Using \eqref{eq:gmult} and \eqref{2.04} we see that $g(ab^2c^3 \pi^2, \pi^j)=0$ unless $j=3$, in which case we deduce from quartic reciprocity, \eqref{eq:gmult} and \eqref{2.04} that (note that $\leg{-1}{\pi}_4= \leg{-1}{\pi^3}_4$ by \eqref{2.05})
\begin{align*}
   g(ab^2c^3 \pi^2, \pi^3n') = N(\pi)^2\overline{g}(\pi)\overline{\leg{-ab^2c^3}{\pi^3}}_4 \psi_{\pi^3}(n')g(ab^2c^3, n').
\end{align*}
   This implies that
\begin{equation} \label{2.16}
  h_{ab\pi}(ab^2c^3\pi^2 ,s;\chi)=h_{ab}(ab^2c^3\pi^2,s;\chi)-\chi(\pi)^3N(\pi)^{2-3s}\overline{g}(\pi)\overline{\leg{-ab^2c^3}{\pi^3}}_4  h_{ab\pi}(ab^2c^3 ,s;\psi_{\pi^3}\chi).
\end{equation}

   On the other hand, we have
\begin{align*}
  h_{ab\pi}(ab^2c^3 ,s;\psi_{\pi^3}\chi) &=\sum_{(n,ab\pi)=1}\frac {\psi_{\pi^3}(n)\chi(n)g(ab^2c^3,n)}{N(n)^s} \\
  &=\sum_{(n,ab)=1}\frac {\psi_{\pi^3}(n)\chi(n)g(ab^2c^3,n)}{N(n)^s}-\sum_{\substack{\pi | n \\ (n,ab)=1 }}\frac {\psi_{\pi^3}(n)\chi(n)g(ab^2c^3,n)}{N(n)^s}.
\end{align*}
   Writing in the latter sum $n = \pi^jn'$ , where $(n', \pi) = 1$, then we have by \eqref{2.03} that
\begin{align*}
   g(ab^2c^3, \pi^jn') = \leg{\pi^j}{n'}_4\leg{n'}{\pi^j}_4g(ab^2c^3, \pi^j)g(ab^2c^3, n').
\end{align*}
    Using \eqref{eq:gmult} and \eqref{2.04} we see that $g(ab^2c^3, \pi^j)=0$ unless $j=1$, in which case we deduce from quartic reciprocity, \eqref{eq:gmult} and \eqref{2.04} that
\begin{align*}
g(ab^2c^3, \pi n') = g(\pi)\overline{\leg{ab^2c^3}{\pi}}_4\psi_{\pi}(n')g(ab^2c^3\pi^2, n').
\end{align*}
   This implies that (noting that $\psi_{\pi}\psi_{\pi^3}$ is principal)
\begin{align}
\label{2.17}
  h_{ab\pi}(ab^2c^3 ,s;\chi) =h_{ab}(ab^2c^3 ,s;\psi_{\pi^3}\chi)-\psi_{\pi^3}(\pi)\chi(\pi)N(\pi)^{-s}g(\pi)\overline{\leg{ab^2c^3}{\pi}}_4  h_{ab\pi}(ab^2c^3\pi^2 ,s;\chi).
\end{align}

   We then deduce from \eqref{2.16} and \eqref{2.17} that
\begin{align*}
  h_{ab\pi}(ab^2c^3\pi^2,s;\chi) =& \left( 1-\psi_{\pi^3}(\pi)\chi(\pi)^4N(\pi)^{2-4s}|g(\pi)|^2\overline{\leg{-1}{\pi^3}}_4 \right)^{-1}(h_{ab}(ab^2c^3\pi^2,s;\chi) \\
  & \hspace*{3.5cm} -\chi(\pi)^3N(\pi)^{2-3s}\overline{g}(\pi)\overline{\leg{-ab^2c^3}{\pi^3}}_4  h_{ab}(ab^2c^3,s;\psi_{\pi^3}\chi) \\
\end{align*}
  An induction argument on the number of prime divisors of $b$ gives \eqref{2.14}.
\end{proof}

    Now we deduce readily the following lemma concerning the analytic behavior of $h(r,s;\chi)$ on $\Re(s) >1$.
\begin{lemma}
\label{lem1} The function $h(r,s;\chi)$ has meromorphic continuation to the entire complex plane. It is holomorphic in the
region $\sigma=\Re(s) > 1$ except possibly for a pole at $s = 5/4$. For any $\varepsilon>0$, letting $\sigma_1 = 3/2+\varepsilon$, then for $\sigma_1 \geq \sigma \geq \sigma_1-1/2$, $|s-5/4|>1/8$, we have
\begin{equation} \label{hbound}
h(r,s;\chi) \ll N(r)^{\frac 12(\sigma_1-\sigma+\varepsilon)}(1+t^2)^{ \frac {3}{2}(1+\varepsilon)},
\end{equation}
  where $t=\Im(s)$. Moreover, the residue satisfies
\begin{equation} \label{resbound}
 \mathrm{Res}_{s=5/4}h(r,s;\chi) \ll N(r)^{1/8+\varepsilon}.
 \end{equation}
\end{lemma}
\begin{proof}
We can reduce the estimation of $h(r, s;\chi)$ to that of $h_1(r,s;\chi)$. To see this, we write $r=r_1r_2^2r_3^3 r_4^4$ with $r_1$, $r_2$ and $r_3$ square-free. \eqref{2.12} gives that
   \[ h(r,x;\chi) = h(r_1r_2^2r_3^3, r_4^*, s; \chi)  \]
where $r_4^*$ is the product of primes dividing $r_4$.  Then applying \eqref{2.11}, $h(r_1r_2^2r_3^3, r_4^*, s; \chi)$ can be written as a sum involving $h(a^2r, s; \psi_a \chi)$ with $a$ dividing $r_4^*$.   Repeating this process if needed (as in the case $(r_4, r_2)>1$), we will arrive at an expression involving $h(r_1r_2^2r_3^3, s; \chi)$ with square-free $r_1$, $r_2$ and $r_3$.  Now, we can apply \eqref{2.13}, \eqref{2.14}, \eqref{hstardef} and \eqref{2.15} and end up with an expression involving $h_1(r_1r_2^2r_3^3, s; \chi)$. \newline

Now it follows from the proof of the Lemma on p. 200 of \cite{P} that $h_1(r,s;\chi)$ satisfies the properties of the lemma, this gives us the desired result.
\end{proof}

Now we are ready to state and prove the necessary bound.

\begin{lemma} \label{crubound}
 Let $\widetilde{g}(n) =g(n) N(n)^{-1/2}$. For any rational integer $r \geq 0, a \in \mz[i]$, we have for $y \geq x \geq 1$,
\begin{align} \label{bound}
      \sum_{c \equiv 1 \bmod {16}}\widetilde{g}(c)\overline{\chi}_c(a)\Gamma \left( \frac 12, \frac {2\pi
  2^rN(a)x}{|D_K|N(c)} \right) \exp \left( \frac {-N(c)}y \right) \ll_{\varepsilon}
      N(a)^{1/4+\varepsilon}y^{1/2+2\varepsilon}+y^{3/4+\varepsilon}N(a)^{1/8+\varepsilon}.
\end{align}
\end{lemma}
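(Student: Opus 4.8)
The plan is to convert the left-hand side of \eqref{bound} into a double Mellin--Barnes integral whose integrand is built from the Dirichlet series $h(a,s;\chi)$, and then to read off the two error terms from the analytic information in Lemma~\ref{lem1}.

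First I would reduce the summand. Note that $\overline{\chi}_c(a)$ vanishes unless $(a,c)=1$, so we may restrict to $(a,c)=1$. For such $c$, applying \eqref{eq:gmult} with first argument equal to $1$ gives $g(a,c)=\overline{\chi}_c(a)g(c)$, so that $\widetilde{g}(c)\overline{\chi}_c(a)=g(a,c)N(c)^{-1/2}$; the possible $(1+i)$-factor of $a$ is harmless because $\chi_c(1+i)=1$ by \eqref{2.5}, and keeping the full $a$ as first argument is legitimate since Lemma~\ref{lem1} is stated in terms of $N(a)$. Hence the sum in \eqref{bound} equals $\sum_{c\equiv 1\,(16)} g(a,c)N(c)^{-1/2}W(c)$ with the smooth weight $W(c)=\Gamma(\tfrac12,\tfrac{2\pi 2^rN(a)x}{|D_K|N(c)})\exp(-N(c)/y)$.

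Next I would represent $W(c)$ by its two Mellin integrals: the incomplete Gamma factor by its defining integral $\Gamma(\tfrac12,\xi)=\frac{1}{2\pi i}\int_{(2)}\frac{\Gamma(s+1/2)}{\Gamma(1/2)}\frac{\xi^{-s}}{s}\,ds$, and the exponential by $\exp(-N(c)/y)=\frac{1}{2\pi i}\int_{(2)}\Gamma(w)y^w N(c)^{-w}\,dw$. With $\xi=\frac{2\pi 2^rN(a)x}{|D_K|N(c)}$ all dependence on $c$ collapses to a single power $N(c)^{-u}$, where $u=\tfrac12-s+w$, so the inner sum becomes $\sum_{c\equiv1\,(16)}g(a,c)N(c)^{-u}$. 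Detecting $c\equiv1\pmod{16}$ among primary $c$ by orthogonality of the ray class characters modulo $16$, i.e.\ writing $\mathbf 1_{c\equiv1\,(16)}=|G|^{-1}\sum_{\chi}\chi(c)$ for the appropriate finite group $G$, this sum equals $|G|^{-1}\sum_{\chi}h(a,u;\chi)$, the coprimality $(a,c)=1$ matching the condition $(n,a)=1$ in the definition of $h(a,u;\chi)$. At this stage the whole expression is a double contour integral, over $s$ and $w$, of $|G|^{-1}\sum_\chi h(a,u;\chi)$ against $\frac{\Gamma(s+1/2)}{\Gamma(1/2)s}\Gamma(w)\big(\tfrac{2\pi 2^rN(a)x}{|D_K|}\big)^{-s}y^w$.

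Finally I would shift contours and invoke Lemma~\ref{lem1}. Starting with $\Re s=\varepsilon$ and $\Re w=2$ (so that $\Re u>3/2$ and the $c$-series converges absolutely), I would shift the $w$-line to the left until $\Re u=1+\varepsilon$. In the strip $1<\Re u\le 3/2$ the only pole of $h$ is at $u=5/4$; its residue is $\ll N(a)^{1/8+\varepsilon}$, and the accompanying factor $y^{w}=y^{3/4+s}$ together with $N(a)^{-s}$ (evaluated at $\Re s=\varepsilon$) yields the contribution $\ll y^{3/4+\varepsilon}N(a)^{1/8+\varepsilon}$, the second term of \eqref{bound}. On the shifted line $\Re u=1+\varepsilon$ one has $|u-5/4|>1/8$ and the strip hypothesis of Lemma~\ref{lem1} holds with $\sigma_1=3/2+\varepsilon$, giving $h(a,u;\chi)\ll N(a)^{1/4+\varepsilon}(1+|\Im u|^2)^{3/2+\varepsilon}$; here the total power of $y$ is $\Re u-\tfrac12+\Re s=\tfrac12+2\varepsilon$, producing the first term $N(a)^{1/4+\varepsilon}y^{1/2+2\varepsilon}$. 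The exponential decay of $\Gamma(s+1/2)$ and $\Gamma(w)$ in the vertical direction absorbs the polynomial factor $(1+|\Im u|^2)^{3/2+\varepsilon}$ and secures absolute convergence, while keeping $\Re s>0$ and $\Re w=\Re u-\tfrac12+\Re s>0$ ensures that no poles of $1/s$ or of $\Gamma(w)$ are crossed. The crux is the contour optimization under the constraint that Lemma~\ref{lem1} is usable only in the vertical strip $1+\varepsilon\le\Re u\le 3/2+\varepsilon$: one cannot lower $\Re u$ past $1+\varepsilon$, and this single constraint fixes the balance between the two error terms (and ultimately drives the exponent $13/14$ in Theorem~\ref{firstmoment}). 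The remaining points are routine: the orthogonality relation is standard once $G$ is identified; uniformity in the rational integer $r\ge 0$ is automatic because $|2^{-rs}|\le 1$ for $\Re s\ge 0$, so the factor $2^r$ only sharpens the decay; and uniformity in $a$ follows from the explicit $N(a)$-dependence in Lemma~\ref{lem1}.
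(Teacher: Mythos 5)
Your proposal is correct and follows essentially the same route as the paper: detect $c\equiv 1\pmod{16}$ by ray class characters, write both smooth weights as Mellin--Barnes integrals so that the $c$-sum collapses to $h(a,u;\chi)$ with $u=\tfrac12-s+w$, then shift so that $\Re u=1+\varepsilon$, picking up the pole at $u=5/4$ (giving $y^{3/4+\varepsilon}N(a)^{1/8+\varepsilon}$) and bounding the shifted integral by Lemma~\ref{lem1} (giving $N(a)^{1/4+\varepsilon}y^{1/2+2\varepsilon}$). The only differences from the paper's argument are cosmetic (variable names and the order in which the two contours are introduced and moved).
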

\begin{proof} Let $\chi$ runs over all ray class characters $\pmod {16}$, we use the ray class characters $\pmod {16}$ to detect the congruence condition
   $c \equiv 1 \pmod {16}$ to obtain
\begin{equation} \label{2.7}
\begin{split}
   & \sum_{c \equiv 1 \bmod {16}}\widetilde{g}(c)\overline{\chi}_c(a)\Gamma \left( \frac 12, \frac {2\pi
  2^rN(a)x}{|D_K|N(c)} \right) \exp \left( -\frac {N(c)}y \right)  \\
   =& \frac {1}{\#h_{(16)}}\sum_{\chi \bmod {16}} \sum_{c \equiv 1 \bmod {(1+i)^3}}\widetilde{g}(c)\chi(c)\bar{\chi}_c(a)\Gamma\left( \frac 12, \frac {2\pi
  2^rN(a)x}{|D_K|N(c)} \right)\exp\left( -\frac {N(c)}y \right)  \\
  =&\frac {1}{\#h_{(16)}}\sum_{\chi \bmod {16}} \frac {1}{2\pi
   i}\int\limits\limits_{(2)}\frac {\Gamma(s'+1/2)}{\Gamma (1/2)} \left( \frac {|D_K|}{2\pi
  2^rN(a)x} \right)^{s'}\sum_{c \equiv 1 \bmod {(1+i)^3}}\frac {g(c)\chi(c)\overline{\chi}_c(a)}{N(c)^{1/2-s'}}\exp\left( - \frac {N(c)}y \right) \frac
   {\dif s'}{s'}.
   \end{split}
\end{equation}
    In view of the exponential decay of $e^{-x}$, we see that the inner sum in the last expression of \eqref{2.7} is holomorphic in the
region $\Re(s') >0$. We can shift the contour of integration in the last expression of \eqref{2.7} to $\Re(s')=\varepsilon >0$, which we now fix. By the Mellin inversion, we can recast the inner sum in the last expression of \eqref{2.7} as
\begin{align}
\label{2.8}
   \frac {1}{2\pi
   i}\int\limits\limits_{(2)}\Gamma(s)y^s\sum_{\substack{(c,a)=1 \\ c \equiv 1 \bmod {(1+i)^3}}}\frac {g(c)\chi(c)\overline{\chi}_c(a)}{N(c)^{1/2-s'+s}} \ \dif s=\frac {1}{2\pi
   i}\int\limits\limits_{(2)}\Gamma(s)h \left( a, \frac 12-s'+s;\chi \right) y^s \ \dif s.
\end{align}
   It follows from Lemma \ref{lem1} and Stirling's formula that by shifting the contour of integration to $\Re(s)=1/2+2\varepsilon$, the integration in \eqref{2.8} is
\begin{align} \label{2.10'}
  \ll N(a)^{1/4+\varepsilon}y^{1/2+2\varepsilon}+y^{3/4+\epsilon}N(a)^{1/8+\varepsilon}.
\end{align}
Here, the first term in the above bound comes from estimating the integral over the line $\Re(s) = 1/2+2 \varepsilon$ and using \eqref{hbound}.  The second term is to bound the residue at $s=5/4$ for which we use \eqref{resbound}.  Now, applying \eqref{2.10'} for the inner sum in the last expression of \eqref{2.7} allows us to readily deduce \eqref{bound}.
\end{proof}

\subsection{The large sieve with quartic symbols}  Another important input of this paper is the following large sieve inequality for quartic Hecke characters.   The study of the large sieve inequality for characters of a fixed order has a long history.  We refer the reader to \cites{DRHB, DRHB1, G&Zhao, B&Y, BGL}.

\begin{lemma} \label{quartls}
Let $M,N$ be positive integers, and let $(a_n)_{n\in \mathbb{N}}$ be an arbitrary sequence of complex numbers, where $n$ runs over $\mz[i]$. Then we have
\begin{equation*}
%%\label{eq:quartic}
 \sumstar_{\substack{m \in \mz[i] \\N(m) \leq M}} \left| \ \sumstar_{\substack{n \in \mz[i] \\N(n) \leq N}} a_n \leg{n}{m}_4 \right|^2
 \ll_{\varepsilon} (M + N + (MN)^{2/3})(MN)^{\varepsilon} \sum_{N(n) \leq N} |a_n|^2,
\end{equation*}
   for any $\varepsilon > 0$,where the asterisks indicate that $m$ and $n$ run over square-free elements of $\mz[i]$ that are congruent to $1$ modulo $(1+i)^3$ and $(\frac
{\cdot}{m})_4$ is the quartic residue symbol.
\end{lemma}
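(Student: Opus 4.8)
\emph{Proof idea.} The plan is to follow the now-standard strategy for large sieve inequalities attached to power residue symbols, developed by Heath-Brown \cite{DRHB} and carried over to higher-order characters in \cites{B&Y, G&Zhao}. After a smooth dyadic decomposition we may assume $N(m)\asymp M$ and $N(n)\asymp N$, with the $m$-sum weighted by a fixed smooth bump function supported on $N(m)\asymp M$; this smoothing is what makes the Poisson step below effective, and it can be removed at the end at the cost of a factor $(MN)^{\varepsilon}$. First I would open the square, writing the left-hand side as
\[
  \sum_{n_1,n_2} a_{n_1}\overline{a_{n_2}}\, \sumstar_{m}\leg{n_1}{m}_4\overline{\leg{n_2}{m}_4},
\]
where $n_1,n_2$ run over squarefree primary elements with $N(n_i)\asymp N$ and the inner sum is over the smoothly weighted squarefree primary moduli $m$. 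The diagonal contribution, consisting of the pairs for which $n_1n_2^3$ is, up to a unit, a perfect fourth power --- which for squarefree $n_1,n_2$ forces $n_1=n_2$ --- is bounded trivially by $\ll M^{1+\varepsilon}\sum_n|a_n|^2$, accounting for the $M$ in the claimed estimate.

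For the off-diagonal pairs the key manipulation is quartic reciprocity (Section~\ref{sec2.4}): since $m$ and the $n_i$ are primary, $\leg{n_i}{m}_4=\psi_{n_i}(m)\leg{m}{n_i}_4$, so the inner summand becomes $\psi_{n_1n_2}(m)\leg{m}{n_1n_2^3}_4$, a ray class character $\pmod{16}$ times a quartic character in $m$ to the squarefree modulus $\mathrm{rad}(n_1n_2)$. The restriction of $m$ to squarefree primary elements is then detected by M\"obius inversion on $\mz[i]$ together with the ray class characters $\pmod{16}$, compare the treatment in the proof of Lemma~\ref{crubound}; this reduces the inner sum to smoothly weighted quartic character sums in $m$ over arithmetic progressions to a fixed multiple of $\mathrm{rad}(n_1n_2)$.

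I would then apply Poisson summation to each such smooth $m$-sum over residue classes modulo a fixed multiple of $\mathrm{rad}(n_1n_2)$. Off the diagonal the character is nontrivial, so the zero frequency contributes only through the sieving congruences, while each nonzero frequency produces a quartic Gauss sum attached to $\mathrm{rad}(n_1n_2)$, of modulus exactly $\sqrt{N(\mathrm{rad}(n_1n_2))}$ on squarefree arguments by \eqref{2.1} and zero otherwise. Since the dual sum has effective length $\asymp M/N(\mathrm{rad}(n_1n_2))$, each off-diagonal pair is controlled by these two quantities; inserting $|a_{n_1}\overline{a_{n_2}}|\le\tfrac12(|a_{n_1}|^2+|a_{n_2}|^2)$ and summing over $n_1,n_2$ with $N(n_i)\asymp N$ yields the remaining $N+(MN)^{2/3}$ contributions, with the crossover between the two ranges governed by the length of the dual sum.

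The hard part will be the off-diagonal bookkeeping responsible for the weaker cross term $(MN)^{2/3}$, as against the $(MN)^{1/2}$ available in the quadratic case. One must isolate the fourth-power part of $n_1n_2$, since only \emph{squarefree} arguments yield the square-root cancellation of \eqref{2.1}, whereas arguments with a nontrivial fourth-power divisor have to be extracted and estimated separately; and one must carry the reciprocity factors $\psi_{n_1n_2}$ and the congruence $m\equiv 1\pmod{(1+i)^3}$ consistently through the Poisson step so that the sums appearing are genuinely the Gauss sums of Section~\ref{sec2.4}. Handling all of this uniformly in $n_1,n_2$ and then balancing the zero- and nonzero-frequency contributions is where the full strength of the inequality is won.
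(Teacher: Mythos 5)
The paper does not actually prove this lemma: it is quoted directly as a special case of \cite[Theorem 1.3]{BGL} (and as an improvement of \cite[Theorem 1.1]{G&Zhao}), so any self-contained argument is necessarily a departure from the text. Your outline does follow the general Heath-Brown strategy that underlies those references --- open the square, separate the diagonal $n_1n_2^3 = \square^2\square^2$ (which for squarefree primary $n_i$ forces $n_1=n_2$ and gives the $M$ term), convert $\leg{n_i}{m}_4$ by quartic reciprocity into a character in $m$ modulo $n_1n_2^3$ twisted by $\psi_{n_1n_2}$, and apply Poisson summation in $m$. Up to that point the sketch is sound.

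The genuine gap is in the final step, where you claim that estimating each nonzero frequency by the Gauss-sum modulus $\sqrt{N(\mathrm{rad}(n_1n_2))}$, inserting $|a_{n_1}\overline{a_{n_2}}|\le\tfrac12(|a_{n_1}|^2+|a_{n_2}|^2)$, and summing over $n_1,n_2$ ``yields the remaining $N+(MN)^{2/3}$.'' It does not. After Poisson the inner $m$-sum is of size roughly $\frac{M}{N(q)}\sum_{k}|g(k,q)|$ with $N(q)\asymp N^2$ and with $\asymp N(q)/M$ effective frequencies, so the trivial bound for a single off-diagonal pair is $\max\bigl(M/N,\,N\bigr)$; summing over the $\asymp N^2$ off-diagonal pairs gives $\max(M,N^2)\sum_n|a_n|^2$, which is far weaker than $(M+N+(MN)^{2/3})\sum_n|a_n|^2$ in the critical range $M\asymp N$. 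The exponent $2/3$ is not obtained by pointwise estimation at all: one must keep the dual sum intact as a bilinear form in the Gauss sums $g(k,n_1n_2^3)$, apply Cauchy--Schwarz and a further large sieve in the new variable $k$ (using the multiplicativity \eqref{2.03} and the vanishing of $g$ off squarefree arguments from \eqref{2.1}), and thereby derive a functional inequality expressing the optimal large-sieve constant $\Delta(M,N)$ in terms of $\Delta$ at interchanged and reduced parameters. Iterating and solving that recursion is what produces $(MN)^{2/3}$; this is the core of \cite{DRHB}, \cite{B&Y} and \cite{BGL}, and it is entirely absent from your sketch. Without it the argument as written proves only a substantially weaker inequality, which would not suffice for the error terms in Section 3.
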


\begin{proof}
This is a special case of \cite[Theorem 1.3]{BGL} which improves the bound in \cite[Theorem 1.1]{G&Zhao} (the exponent of $MN$ is $3/4$ in \cite{G&Zhao}).
\end{proof}

\section{Proof of Theorem~\ref{firstmoment}}

\subsection{The main term of the first moment}

We have, using \eqref{approxfuneq} (choosing $x=y^l$, where $0 < l < 1$ will be specified later)
\[ \sumstar_{c \equiv 1 \bmod {16}}L \left( \frac{1}{2},
   \chi_c \right) \exp \left( -\frac{N(c)}{y} \right)={\sum}_1+{\sum}_2, \]
   where
\begin{equation} \label{sum1}
   {\sum}_1 =\sumstar_{c \equiv 1 \bmod {16}} \ \sum_{0 \neq \mathcal{A} \subset
  \mathcal{O}_K} \frac{\chi_c(\mathcal{A})}{N(\mathcal{A})^{1/2}}\Gamma \left( \frac{1}{2}, \frac{2\pi}{x}
  N(\mathcal{A} ) \right) \exp\left( -\frac{N(c)}{y} \right),
  \end{equation}
  and
  \begin{equation} \label{sum2}
    {\sum}_2 =\sumstar_{c \equiv 1 \bmod {16}}\frac{W(\chi_c)}{N(c)^{1/2}} \sum_{0 \neq \mathcal{A} \subset
  \mathcal{O}_K}\frac{\overline{\chi}_c(\mathcal{A})}{N(\mathcal{A})^{1/2}}\Gamma \left( \frac{1}{2}, \frac{2\pi
  N(\mathcal{A})x}{|D_K|N(c)} \right)\exp\left( -\frac{N(c)}{y} \right).
  \end{equation}
   We deal with ${\sum}_1$ first. Since any integral non-zero ideal $\mathcal{A}$ in $\mz[i]$ has a unique generator
$(1+i)^ra$, where $r \in \intz$, $r \geq 0$, $a \in \intz[i]$ and $a \equiv 1 \pmod
{(1+i)^3}$, it follows from the quartic reciprocity law and \eqref{2.5} that
$\chi_{c}(\mathcal{A}) = \chi_a(c)$ (recall our convention that $\chi_1$ is the principal character $\pmod 1$). This enables us to recast ${\sum}_1$ as
\begin{equation} \label{firstsum}
\begin{split}
   {\sum}_1 &= \sumstar_{c \equiv 1 \bmod {16}}\sum_{\substack{r \geq 0 \\ a \equiv 1 \bmod
  {(1+i)^3}}} \frac {\chi_a(c)}{2^{r/2}N(a)^{1/2}}\Gamma \left( \frac{1}{2}, \frac{2\pi}{x}
  2^rN(a) \right) \exp \left( -\frac{N(c)}{y} \right) \nonumber \\
  &= \sum_{r,a}\left( \ \sumstar_{c \equiv 1 \bmod {16}} \chi_a(c)\exp \left(- \frac{N(c)}{y} \right) \right) \frac {\Gamma (1/2, 2\pi
  2^rN(a)/x)}{2^{r/2}N(a)^{1/2}}.
  \end{split}
\end{equation}
   For $a$, a fourth power, the inner sum above is
\begin{equation} \label{integral}
\begin{split}
   \sumstar_{c \equiv 1 \bmod {16}} \chi_a(c)\exp \left( - \frac{N(c)}{y} \right) &=\frac {1}{2\pi
   i}\int\limits_{(2)}\Gamma (s)y^s \left( \ \sumstar_{\substack{c \equiv 1 \bmod {16} \\ (a,c)=1}} \frac {1}{N(c)^s} \right) \dif s \\
   &=\frac {1}{\#h_{(16)}}\sum_{\chi \bmod {16}}\frac {1}{2\pi
   i}\int\limits_{(2)}\Gamma (s)y^s\left( \sum_{\substack{\mathcal{A} \neq 0 \\ (\mathcal{A}, a)=1 }} \frac{\chi(\mathcal{A})|\mu_{[i]}(\mathcal{A})|}{N(\mathcal{A})^s} \right) \dif s,
   \end{split}
\end{equation}
   where $\chi$ runs over all ray class characters $\pmod {16}$.
    Note that we have
\begin{equation*}
 \sum_{\substack{\mathcal{A} \neq 0 \\ (\mathcal{A}, a)=1}}
  \chi(\mathcal{A}) |\mu_{[i]}(\mathcal{A})|N(\mathcal{A})^{-s}
   =
\begin{cases}
    \frac{\zeta_K(s)}{\zeta_K(2s)}\prod_{\mathfrak{p} | (16a)}(1+N(\mathfrak{p})^{-s})^{-1} \qquad & \text{if} \qquad \chi=\chi_0, \\ \\
    L(s, \chi)\prod_{\mathfrak{p}}(1-\chi(\mathfrak{p}^2)N(\mathfrak{p})^{-2s})\prod_{\mathfrak{p} | (16a)}(1+\chi(\mathfrak{p})N(\mathfrak{p})^{-s})^{-1} \qquad & \text{otherwise}.
\end{cases}
\end{equation*}
   Here $\zeta_K(s)$ is the Dedekind zeta function of $K$. Note
   that both $\zeta_K(2s)$ and $\prod_{\mathfrak{p}}(1-\chi(\mathfrak{p}^2)N(\mathfrak{p})^{-2s})$ are holomorphic at $\Re(s) >1$ and
   $O_{\varepsilon} (1)$ for $\Re(s) \geq 1+\varepsilon$.
   Moreover, for a fixed non-principal character $\chi$, let
   $\chi'$ be the primitive character that induces $\chi$. Then it is easy to see that $L(s, \chi)=L(s, \chi')$ since $\chi$ is of conductor $16$.
   By a result of E. Landau \cite{Landau} (see also
\cite[Theorem 2]{Sunley}), which states that for an algebraic
number field $K$ of degree $n \geq 2$,
   $\chi$ any non-principal primitive ideal character of $K$ with conductor $\mathfrak{f}$, $k =|N(\mathfrak{f}) \cdot d_K|$ with $d_K$ being the
discriminant of $K$, we have for $X \geq 1$,
\begin{equation*}
%%\label{4.7}
   \sum_{N(I) \leq X}\chi(I) \leq k^{1/(n+1)}\log^n(k) \cdot X^{(n-1)/(n+1)},
\end{equation*}
where $I$ runs over integral ideas of $K$. It follows from this
and partial summation that $L(s,\chi) \ll 1$ when $\Re(s) \geq 1/2$ for any non-principal
ray class character $\chi \pmod {16}$.  Thus on moving the line
of integration on the right-hand side of \eqref{integral} to
$\Re(s) = 1/2+\varepsilon$ and noting that on this line
\begin{align*}
  \prod_{\mathfrak{p} | (16a)}(1+N(\mathfrak{p})^{-s})^{-1} \ll N(a)^{\varepsilon} \; \; \; \; \mbox{and} \; \; \; \;
  \hspace{0.1in} \prod_{\mathfrak{p} |
(16a)}(1+\chi(\mathfrak{p})N(\mathfrak{p})^{-s})^{-1} \ll
N(a)^{\varepsilon},
\end{align*}
  we get that the sum on the left-hand side of \eqref{integral} equals asymptotically
$C_ay+O(y^{1/2+\varepsilon}N(a)^{\varepsilon})$, where
\begin{align*}
  C_a=\frac {\text{res}_{s=1}\zeta_K(s)}{\#h_{(16)}\zeta_K(2)}\prod_{\mathfrak{p} |
  (16a)}(1+N(\mathfrak{p})^{-1})^{-1}.
\end{align*}

   It follows from this and \eqref{2.07} that the contribution from fourth-powers $a$ in \eqref{firstsum} is exactly
\begin{equation} \label{Adef}
Ay+O \left( y^{1+\epsilon}x^{-1/4}+y^{1/2+\varepsilon} \right), \; \mbox{where} \; A=\left( 2+\sqrt{2} \right) \frac {\text{res}_{s=1}\zeta_K(s)}{\#h_{(16)}\zeta_K(2)}
\sum_{(\mathcal{A}, 2)=1 }\frac
{1}{N(\mathcal{A})^{2}}\prod_{\mathfrak{p} |
  (16)\mathcal{A}} \left( 1+N(\mathfrak{p})^{-1} \right)^{-1}.
\end{equation}

\subsection{The remainder terms of the first moment}

   For non-fourth power $a$, $\chi_a$ is non-trivial and we have the analogue of the P\'olya-Vinogradov
   inequality from \cite[Lemma 3.1]{G&Zhao}, that for $y \geq 1$,
\begin{equation}
\label{7.3}
    \sum_{c \equiv 1 \bmod{ (1+i)^3}} \chi_{a}(c)\exp \left( -\frac{N(c)}{y} \right) \ll_{\varepsilon} N(a)^{(1+\varepsilon)/2}.
\end{equation}
   Note that we can assume $N(c) \ll y^{1+\varepsilon}$ and $N(a) \ll x^{1+\varepsilon}$ in view of the exponential decay of the
test functions. We obtain that, after using $\mu_{[i]}$ to detect the
condition that $c$ is square-free,
\begin{align*}
 \sumstar_{c \equiv 1 \bmod {16}} \chi_a(c)\exp \left( -\frac{N(c)}{y} \right)
   =& \sum_{c \equiv 1 \bmod {16}}
   \chi_a(c)\exp \left( -\frac{N(c)}{y} \right) \sum_{\substack{d^2|c \\ d \equiv 1 \bmod
   {(1+i)^3}}}\mu_{[i]}(d) \\
   =& \sum_{\substack{N(d) \leq B \\ d \equiv 1 \bmod
   {(1+i)^3}}}\mu_{[i]}(d)\chi_a(d^2)\sum_{c \equiv \overline{d}^2 \bmod {16}}
   \chi_a(c)\exp \left( -\frac{N(d^2c)}{y} \right) \\
   & \hspace*{1cm} +\sum_{\substack{N(d) > B \\ d \equiv 1 \bmod
   {(1+i)^3}}}\mu_{[i]}(d)\chi_a(d^2)\sum_{c \equiv \overline{d}^2 \bmod {16}}
   \chi_a(c)\exp \left( -\frac{N(d^2c)}{y} \right).
\end{align*}
  In the second sum of the last expression above, we further write $c=c'e^2$ with $c'$
  square-free to arrive at
\begin{align*}
 \sumstar_{c \equiv 1 \bmod {16}} & \chi_a(c)\exp \left( -\frac{N(c)}{y} \right) \\
   =& \sum_{\substack{N(d) \leq B \\ d \equiv 1 \bmod
   {(1+i)^3}}}\mu_{[i]}(d)\chi_a(d^2)\sum_{c \equiv \overline{d}^2 \bmod {16}}
   \chi_a(c)\exp\left( -\frac{N(d^2c)}{y} \right) \\
   & \hspace*{1cm}+\sum_{b \equiv 1 \bmod
   {(1+i)^3}}\chi_a(b^2)\left( \sum_{\substack{d|b, \ N(d) > B \\d \equiv 1 \bmod
   {(1+i)^3}}}\mu_{[i]}(d) \right) \sumstar_{c \equiv \overline{b}^2 \bmod {16}}
   \chi_a(c)\exp \left( -\frac{N(b^2c)}{y} \right) = R+S, \; \mbox{say}.
\end{align*}
   Here $B$ satisfies the bounds $y^{1/2}x^{-1/2} < B <y^{1/2}$ (and hence $x \geq y/N(b)^2$) and will be chosen optimally later. Using the ray
   class characters $\chi \pmod {16}$ to detect the congruence condition
   $c \equiv \overline{d}^2 \pmod {16}$, and applying \eqref{7.3},
   we get
\[ \sum_{c \equiv \overline{d}^2 \bmod {16}}
   \chi_a(c)\exp \left( -\frac{N(d^2c)}{y} \right)
   = \frac {1}{\#h_{(16)}}\sum_{\chi \bmod {16}}\chi(d^2)\sum_{c \equiv 1 \bmod {(1+i)^3}}
   \chi(c)\chi_a(c)\exp \left( -\frac{N(d^2c)}{y} \right) \ll N(a)^{(1+\varepsilon)/2}. \]
   It follows from this that the contribution of $R$ to ${\sum}_1$ is at most $xBy^{\varepsilon}$. Here we note that $\chi\chi_a$ is non-trivial $\pmod {16m}$ as $\chi_a$ is non-trivial $\pmod m$.\newline

   To deal with $S$, we extract square divisors of $a$ by writing $a = a_1a^2_2$,
   where $a_1, a_2 \equiv 1 \pmod {(1+i)^3}$ and $a_1$ is square-free to get
\begin{align*}
   & \sum_{\substack{a \equiv 1 \bmod {(1+i)^3} \\ N(a) \leq x^{1+\varepsilon}}} \
    \sumstar_{\substack{c \equiv \overline{b}^2 \bmod {16} \\ N(c) \leq y^{1+\varepsilon}}}
   \chi_a(c)\exp \left( -\frac{N(b^2c)}{y} \right)\frac {\Gamma(1/2, 2\pi
  2^rN(a)/x)}{N(a)^{1/2}} \\
  =& \sum_{\substack{a_2 \equiv 1 \bmod {(1+i)^3} \\ N(a_2) \leq x^{(1+\varepsilon)/2}}} \
    \sumstar_{\substack{a_1 \equiv 1 \bmod {(1+i)^3} \\ N(a_1) \leq x^{(1+\varepsilon)}N(a_2)^{-2}}} \
    \sumstar_{\substack{c \equiv \overline{b}^2 \bmod {16} \\ N(c) \leq y^{1+\varepsilon}}}
   \chi_{a_1}(c)\chi^2_{a_2}(c)\exp \left( -\frac{N(b^2c)}{y} \right)\frac {\Gamma(1/2, 2\pi
  2^rN(a_1a^2_2)/x)}{N(a_1a^2_2)^{1/2}}.
\end{align*}
  We deduce, by Cauchy's inequality, that
\begin{equation} \label{7.5}
\begin{split}
  \sumstar_{\substack{a_1 \equiv 1 \bmod {(1+i)^3} \\ N(a_1) \leq x^{(1+\varepsilon)}N(a_2)^{-2}}} & \frac {\Gamma(1/2, 2\pi
  2^rN(a_1a^2_2)/x)}{N(a_1)^{1/2}}
    \sumstar_{\substack{c \equiv \overline{b}^2 \bmod {16} \\ N(c) \leq y^{1+\varepsilon}}}
   \chi_{a_1}(c)\chi^2_{a_2}(c)\exp \left( -\frac{N(b^2c)}{y} \right) \\
   \leq & \left( \ \sumstar_{\substack{a_1 \equiv 1 \bmod {(1+i)^3} \\ N(a_1) \leq x^{(1+\varepsilon)}N(a_2)^{-2}}}\frac {\Gamma^2(1/2, 2\pi
  2^rN(a_1a^2_2)/x)}{N(a_1)} \right)^{1/2}  \\
  & \hspace*{2cm} \times \left( \ \sumstar_{\substack{a_1 \equiv 1 \bmod {(1+i)^3} \\
N(a_1) \leq x^{(1+\varepsilon)}N(a_2)^{-2}}}
  \left| \ \sumstar_{\substack{c \equiv \overline{b}^2 \bmod {16} \\ N(c) \leq y^{1+\varepsilon}}}
   \chi_{a_1}(c)\chi^2_{a_2}(c)\exp \left( -\frac{N(b^2c)}{y} \right) \right|^2 \right)^{1/2}.
\end{split}
\end{equation}
   For the first factor on the right-hand side of \eqref{7.5}, we
   have
\begin{align*}
  \sumstar_{\substack{a_1 \equiv 1 \bmod {(1+i)^3} \\
N(a_1) \leq x^{(1+\varepsilon)}N(a_2)^{-2}}}\frac {\Gamma^2(1/2, 2\pi
  2^rN(a_1a^2_2)/x)}{N(a_1)} \ll \sumstar_{\substack{a_1 \equiv 1 \bmod {(1+i)^3} \\
N(a_1) \leq x^{(1+\varepsilon)}N(a_2)^{-2}}}\frac {1}{N(a_1)} \ll
\log x.
\end{align*}

   For the second factor on the right-hand side of \eqref{7.5}, we
   have, by Lemma~\ref{quartls},
\begin{align*}
&   \sumstar_{\substack{a_1 \equiv 1 \bmod {(1+i)^3} \\
N(a_1) \leq x^{(1+\varepsilon)}N(a_2)^{-2}}}
  \left| \ \sumstar_{\substack{c \equiv \overline{b}^2 \bmod {16} \\ N(c) \leq y^{1+\varepsilon}}}
   \chi_{a_1}(c)\chi^2_{a_2}(c)\exp \left( -\frac{N(b^2c)}{y} \right) \right|^2 \\
  \ll & \sumstar_{\substack{a_1 \equiv 1 \bmod {(1+i)^3} \\ N(a_1) \leq x^{(1+\varepsilon)}N(a_2)^{-2}}}
  \left| \ \sumstar_{\substack{c \equiv \overline{b}^2 \bmod {16} \\ N(c) \leq (y/N(b^2))^{1+\varepsilon}}}
  \chi_{a_1}(c)\chi^2_{a_2}(c)\exp\left( -\frac{N(b^2c)}{y} \right) \right|^2 \\
   \ll &
   y^{\varepsilon}\left( \frac{x^{1+\varepsilon}}{N(a_2)^{2}}+\left( \frac{y}{N(b^2)} \right)^{1+\varepsilon}+ \left( \frac{x^{1+\varepsilon}}{N(a_2)^2}  \left( \frac{y}{N(b^2)} \right)^{1+\varepsilon} \right)^{2/3} \right)  \sumstar_{\substack{c \equiv \overline{b}^2 \bmod {16}
\\ N(c) \leq (y/N(b^2))^{1+\varepsilon}}}    \left| \chi^2_{a_2}(c)\exp \left( -\frac{N(b^2c)}{y} \right) \right|^2 \\
  \ll & \left( \frac {x}{N(a_2)^2}+\frac {y}{N(b)^2}+ \left( \frac
{xy}{N(a_2b)^2} \right)^{2/3} \right)\frac {y^{1+4\varepsilon}}{N(b)^2},
\end{align*}

  It now follows that the left-hand side expression of \eqref{7.5} is
\[ \ll y^{\varepsilon/2}\left( \left( \frac {x}{N(a_2)^2}+\frac {y}{N(b)^2}+ \left( \frac
{xy}{N(a_2b)^2} \right)^{2/3} \right) \frac {y^{1+4\varepsilon}}{N(b)^2} \right)^{1/2} \ll y^{3\varepsilon}\left(\frac {(xy)^{1/2}}{N(a_2b)}+\frac
{y}{N(b)^2}+\frac {x^{1/3}y^{5/6}}{N(a_2)^{2/3}N(b)^{5/3}} \right). \]
  From this we deduce that the contribution of $S$ to ${\sum}_1$ is at most
\begin{align*}
 &  \sum_{\substack{N(b) \leq y^{1/2+\varepsilon} \\ b \equiv 1 \bmod
   {(1+i)^3} }}\left( \sum_{\substack{d|b, \  N(d) > B \\d \equiv 1 \bmod
   {(1+i)^3}}}1\right) \sum_{\substack{ N(a_2) \leq x^{(1+\varepsilon)/2} \\ a_2
\equiv 1 \bmod {(1+i)^3} }}
\frac {y^{3\varepsilon}}{N(a_2)}\left( \frac
{(xy)^{1/2}}{N(a_2b)}+\frac
{y}{N(b)^2}+\frac {x^{1/3}y^{5/6}}{N(a_2)^{2/3}N(b)^{5/3}} \right) \\
&=\sum_{\substack{N(d) > B \\ d \equiv 1 \bmod
   {(1+i)^3}}}\sum_{\substack{d|b , N(b) \leq y^{1/2+\varepsilon}\\ b \equiv 1 \bmod
   {(1+i)^3}}}\sum_{\substack{N(a_2) \leq x^{(1+\varepsilon)/2} \\ a_2
\equiv 1 \bmod {(1+i)^3} }}
\frac {y^{3\varepsilon}}{N(a_2)}\left(\frac
{(xy)^{1/2}}{N(a_2b)}+\frac {y}{N(b)^2}+\frac {x^{1/3}y^{5/6}}{N(a_2)^{2/3}N(b)^{5/3}} \right) \\
& \ll y^{6\varepsilon}\left( (xy)^{1/2}+\frac {y}{B}+\frac
{x^{1/3}y^{5/6}}{B^{2/3}} \right).
\end{align*}
   Combining the bounds for $R$ and $S$, we obtain (with a different $\varepsilon$)
\begin{align*}
   {\sum}_1 =Ay+O\left( y^{\varepsilon}\Big ( \frac{y}{x^{1/4}}+xB+ (xy)^{1/2}+\frac {y}{B}+\frac
{x^{1/3}y^{5/6}}{B^{2/3}}  \Big ) \right).
\end{align*}
   We now optimize $B$ to satisfy $xB= x^{1/3}y^{5/6}B^{-2/3}$.   Thus $B=y^{1/2}x^{-2/5}$.  Note
further that when $B>y^{1/2}x^{-1/2}$, we have $y/B<(xy)^{1/2}$. Therefore, for the $B$ thus chosen, we have
\begin{align}
\label{sum1est}
   {\sum}_1 =Ay+O\left( y^{\varepsilon}\left( \frac{y}{x^{1/4}} + y^{1/2}x^{3/5} \right) \right).
\end{align}

   Next we need to bound
\begin{align*}
   {\sum}_2 =\sumstar_{c \equiv 1 \bmod {16}} \frac{W(\chi_c)}{N(c)^{1/2}} \sum_{0 \neq \mathcal{A} \subset
  \mathcal{O}_K} \frac{\overline{\chi}_c(\mathcal{A})}{N(\mathcal{A})^{1/2}}\Gamma \left( \frac{1}{2}, \frac{2\pi
  N(\mathcal{A})x}{|D_K|N(c)} \right)\exp \left( -\frac{N(c)}{y} \right).
\end{align*}
Mindful of the equality $W(\chi_c)=g(c)$ and the fact from \eqref{2.1} that
    $g(c)$ is supported on square-free numbers , we may drop the restriction $^*$ in the outer sum
    above.  Recalling the definition of $\widetilde{g}(c)$ in Lemma \ref{crubound}, we recast
    ${\sum}_2$ as
\begin{align*}
%%\label{7.6}
   {\sum}_2 =\sum_{\substack{r \geq 0 \\ a \equiv 1 \bmod {(1+i)^3}}} \frac {1}{2^{r/2}N(a)^{1/2}}
   \sum_{c \equiv 1 \bmod {16}}\widetilde{g}(c)\overline{\chi}_c(a)\Gamma \left( \frac{1}{2}, \frac{2\pi
  2^rN(a)x}{|D_K|N(c)} \right) \exp\left( -\frac{N(c)}{y} \right).
\end{align*}

   Again we can assume $2^rN(a) \ll y^{1+\varepsilon}/x$ by virtue of the exponential decay of the
test functions. Applying the bound \eqref{bound}, we see immediately that
\begin{align}
\label{sum2est}
   {\sum}_2 \ll_{\varepsilon} \sum_{\substack{N(a) \leq y^{1+\varepsilon}/x \\ a \equiv 1 \bmod {(1+i)^3} }}\frac {N(a)^{1/4+\varepsilon}y^{1/2+2\varepsilon}+y^{3/4+\varepsilon}N(a)^{1/8+\varepsilon}}{N(a)^{1/2}}
   \ll y^{4\varepsilon} \left( \frac {y^{5/4}}{x^{3/4}}+\frac {y^{11/8}}{x^{5/8}} \right).
\end{align}

   From \eqref{sum1est} and \eqref{sum2est}, we conclude that
\[  \sumstar_{c \equiv 1 \bmod {16}}L \left( \frac{1}{2},
   \chi_c \right)\exp\left( -\frac{N(c)}{y} \right)  =Ay+O \left( y^{\varepsilon}\left( \frac{y}{x^{1/4}} + y^{1/2}x^{3/5}+\frac {y^{5/4}}{x^{3/4}}+\frac {y^{11/8}}{x^{5/8}} \right) \right)  =Ay+O\left( y^{13/14+\varepsilon} \right), \]
   on taking $x=y^{5/7}$. \newline

\noindent{\bf Acknowledgments.} P. G. is supported in part by NSFC grants 11371043 and 11871082 and L. Z. by the FRG grant PS43707.  Parts of this work were done when P. G. visited the University of New South Wales (UNSW) in June 2017. He wishes to thank UNSW for the invitation, financial support and warm hospitality during his pleasant stay.  Finally, we would like to thank the anonymous referee for his/her careful reading of the paper and many helpful comments.

\bibliography{biblio}
\bibliographystyle{amsxport}

\vspace*{.5cm}

\noindent\begin{tabular}{p{8cm}p{8cm}}
School of Mathematics and Systems Science & School of Mathematics and Statistics \\
Beihang University & University of New South Wales \\
Beijing 100191 China & Sydney NSW 2052 Austrlia \\
Email: {\tt penggao@buaa.edu.cn} & Email: {\tt l.zhao@unsw.edu.au} \\
\end{tabular}

\end{document}